\newtheorem{theorem}{Theorem}[section]
\newtheorem{proposition}[theorem]{Proposition}
\newtheorem{corollary}[theorem]{Corollary}
\theoremstyle{definition}
\theoremstyle{remark}
\newtheorem{remark}[theorem]{Remark}
\numberwithin{equation}{section}
\begin{document}
\large
\setcounter{page}{1}

\begin{center}
United Nations Educational, Scientific and Cultural Organization \\
and\\
International Atomic Energy Agency \\
THE ABDUS SALAM INTERNATIONAL CENTRE FOR THEORETICAL PHYSICS
\end{center}

\vspace{0.5 cm}

\begin{center}
\textbf{TOPOLOGIES ON CENTRAL EXTENSIONS OF VON NEUMANN ALGEBRAS}
\end{center}

\vspace{0.5 cm}

\begin{center}
Sh. A. Ayupov \footnote{Senior Associate of ICTP. Corresponding  author. sh\_ayupov@mail.ru}\\
\textit{Institute of Mathematics and Information  Technologies, Uzbekistan Academy of Sciences
Dormon yoli str., 29, 100125,  Tashkent,   Uzbekistan} \\
\textit{and} \\
\textit{The Abdus Salam International Centre for Theoretical Physics, Trieste, Italy}
\end{center}

\begin{center}
K. K. Kudaybergenov \footnote{karim2006@mail.ru}\\
\textit{Department of Mathematics, Karakalpak state university
Ch. Abdirov str.,1,  230113, Nukus,    Uzbekistan}
\end{center}

\begin{center}
and 
\end{center}

\begin{center}
R.~T.~Djumamuratov \footnote{rauazh@mail.ru}\\
\textit{Department of Mathematics, Karakalpak state university
Ch. Abdirov str.,1,  230113, Nukus,    Uzbekistan}
\end{center}

\vspace{0.5 cm}

\begin{abstract}
Given a von Neumann algebra $M$ we  consider the  central
exten\-sion $E(M)$ of $M.$  We introduce the topology
$t_c(M)$ on  $E(M)$ generated by a center-valued norm and prove that it coincides
with the topology of convergence locally in measure on $E(M)$ if and only if $M$ does not have
direct summands of type II. We also show that $t_c(M)$ restricted on the set $E(M)_h$ of self-adjoint elements of  $E(M)$ coincides with the order topology on $E(M)_h$  if and only if $M$ is a $\sigma$-finite type  I$_{fin}$
von Neumann algebra.
\end{abstract}

\vspace{0.5 cm}

\begin{center}
MIRAMARE --- TRIESTE
\end{center}

\newpage

\section{Introduction}

In the series of paper \cite{Alb1}-\cite{AK3}  we have considered
derivations on the algebra $LS(M)$ of locally measurable operators
affiliated with a von Neumann algebra $M,$ and on various
subalgebras of $LS(M).$ A complete description of derivations has
been obtained in the case of  von Neumann algebras of type I and
III.
A comprehensive survey of recent results concerning derivations on
various algebras of unbounded operators affiliated with von
Neumann algebras is presented in \cite{AK2}.
A general form  of automorphisms  on the algebra $LS(M)$  in the case of  von Neumann algebras of type I has been obtained in \cite{AK3}. In proof of the main results of the above  papers the crucial role is played by the co-called
central extensions  of von Neumann algebras and also by various topologies considered in \cite{AK1}.

 Let  $M$ be an arbitrary von Neumann algebra with the center $Z(M)$
 and let $LS(M)$ denote the algebra of all locally measurable operators with
 respect $M.$ We  consider the set   $E(M)$  of all elements  $x$ from  $LS(M)$ for which there exists a sequence of
mutually orthogonal central projections  $\{z_i\}_{i\in I}$ in  $M$ with $\bigvee\limits_{i\in I}z_i=\textbf{1},$
such that $z_i x\in M$ for all $i\in I.$
It is known  \cite{AK1} that  $E(M)$ is a *-subalgebra in  $LS(M)$ with the center
 $S(Z(M)),$ where   $S(Z(M))$ is  the algebra of all measurable operators
 with respect to   $Z(M),$ moreover,
  $LS(M)=E(M)$ if and only if $M$ does not have
direct summands of type II.

A similar notion (i.e. the algebra $E(\mathcal{A})$) for
arbitrary *-subalgebras $\mathcal{A}\subset LS(M)$  was independently
introduced  by M.A. Muratov and V.I. Chilin \cite{Mur1}.
The algebra  $E(M)$ is called
\textit{the central extension of} $M.$
It is known (\cite{AK1},
\cite{Mur1}) that an element
$x\in  LS(M)$ belongs to $E(M)$ if and only if there exists
 $f\in S(Z(M))$ such that    $|x|\leq f.$
Therefore for each
 $x\in E(M)$ one can define the following vector-valued norm
 $ ||x||=\inf\{f\in S(Z(M)): |x|\leq f\}.$
This center-valued norm naturally generates a topology on $E(M)$ which denoted by $t_c(M).$

In this paper we study the relationship between the topology $t_c(M)$ on $E(M)$  generated by the above center-valued
norm, the topology $t(M)$ --  of convergence locally in measure, and the order topology
$t_o(M)$ on $E(M)_h$.
We prove that $t_c(M)$ coincides
with the topology $t(M)$ on $E(M)$ if and only if $M$ does not have
direct summands of type II.
We show  that $t_c(M)$ coincides
with the order  topology on $E(M)_h$ if and only if $M$ is a $\sigma$-finite type  I$_{fin}$ algebra.

\section{Central extensions of von Neumann algebras}

Let  $H$ be a complex Hilbert space and let  $B(H)$ be the algebra
of all bounded linear operators on   $H.$ Consider a von Neumann
algebra $M$  in $B(H)$ with the operator norm $\|\cdot\|_M.$ Denote by
$P(M)$ the lattice of projections in $M.$

A linear subspace  $\mathcal{D}$ in  $H$ is said to be
\emph{affiliated} with  $M$ (denoted as  $\mathcal{D}\eta M$), if
$u(\mathcal{D})\subset \mathcal{D}$ for every unitary  $u$ from
the commutant
$$M'=\{y\in B(H):xy=yx, \,\forall x\in M\}$$ of the von Neumann algebra $M.$

A linear operator  $x$ on  $H$ with the domain  $\mathcal{D}(x)$
is said to be \emph{affiliated} with  $M$ (denoted as  $x\eta M$) if
$\mathcal{D}(x)\eta M$ and $u(x(\xi))=x(u(\xi))$
 for all  $\xi\in
\mathcal{D}(x).$

A linear subspace $\mathcal{D}$ in $H$ is said to be \emph{strongly
dense} in  $H$ with respect to the von Neumann algebra  $M,$ if

1) $\mathcal{D}\eta M;$

2) there exists a sequence of projections
$\{p_n\}_{n=1}^{\infty}$ in $P(M)$  such that
$p_n\uparrow\textbf{1},$ $p_n(H)\subset \mathcal{D}$ and
$p^{\perp}_n=\textbf{1}-p_n$ is finite in  $M$ for all
$n\in\mathbb{N},$ where $\textbf{1}$ is the identity in $M.$

A closed linear operator  $x$ acting in the Hilbert space $H$ is said to be
\emph{measurable} with respect to the von Neumann algebra  $M,$ if
 $x\eta M$ and $\mathcal{D}(x)$ is strongly dense in  $H.$ Denote by
 $S(M)$ the set of all measurable operators with respect to
 $M$ (see \cite{Seg}).

A closed linear operator $x$ in  $H$  is said to be \emph{locally
measurable} with respect to the von Neumann algebra $M,$ if $x\eta
M$ and there exists a sequence $\{z_n\}_{n=1}^{\infty}$ of central
projections in $M$ such that $z_n\uparrow\textbf{1}$ and $z_nx \in
S(M)$ for all $n\in\mathbb{N}$ (see \cite{Yea}).

It is well-known  \cite{Mur}, \cite{Yea} that the set $LS(M)$ of
all locally measurable operators with respect to $M$ is a unital
*-algebra when equipped with the algebraic operations of strong
addition and multiplication and taking the adjoint of an operator,
and contains $S(M)$ as a solid *-subalgebra.

Let $(\Omega,\Sigma,\mu)$  be a measure space and from now on
suppose
 that the measure $\mu$ has the  direct sum property, i. e. there is a family
 $\{\Omega_{i}\}_{i\in
J}\subset\Sigma,$ $0<\mu(\Omega_{i})<\infty,\,i\in J,$ such that
for any $A\in\Sigma,\,\mu(A)<\infty,$ there exist a countable
subset $J_{0 }\subset J$ and a set  $B$ with zero measure such
that $A=\bigcup\limits_{i\in J_{0}}(A\cap \Omega_{i})\cup B.$

 We denote by  $L^{0}(\Omega, \Sigma, \mu)$ the algebra of all
(equivalence classes of) complex measurable functions on $(\Omega,
\Sigma, \mu)$ equipped with the topology of convergence in
measure.

Consider the algebra  $S(Z(M))$  of operators which are measurable
with respect to the  center $Z(M)$ of the von Neumann algebra $M.$
Since  $Z(M)$ is an abelian von Neumann algebra  it
 is *-isomorphic to $ L^{\infty}(\Omega, \Sigma, \mu)$
   for an appropriate measure space $(\Omega, \Sigma, \mu)$. Therefore the algebra  $S(Z(M))$ coincides
   with $Z(LS(M))$ and  can be
 identified with the algebra $ L^{0}(\Omega, \Sigma, \mu)$ of all
 measurable functions on $(\Omega, \Sigma, \mu)$.

The basis of neighborhoods of zero in the topology of convergence locally in measure
    on $L^0(\Omega,\Sigma, \mu)$ consists of the sets
$$W(A,\varepsilon,\delta)=\{f\in L^0(\Omega,\Sigma, \mu):\exists B\in \Sigma, \, B\subseteq A, \,
\mu(A\setminus B)\leq \delta, $$
$$ f\cdot \chi_B \in L^{\infty}(\Omega,\Sigma, \mu),\,
||f\cdot \chi_B||_{L^{\infty}(\Omega,\Sigma, \mu)}\leq \varepsilon\},$$
where $\varepsilon, \delta>0, \, A\in \Sigma, \, \mu(A)<+\infty,$
and $\chi_B$ is the characteric
function of the set $B\in \Sigma.$

Recall the definition of the dimension functions on the lattice
$P(M)$ of projection from $M$ (see \cite{Mur}, \cite{Seg}).

By  $L_+$ we denote the set of all measurable  functions  $f:
(\Omega,\Sigma, \mu)\rightarrow [0,{\infty}]$ (modulo functions
equal to zero $\mu$-almost everywhere ).

 Let  $M$ be an arbitrary von Neumann algebra with the center $Z(M)\equiv L^\infty(\Omega,\Sigma, \mu).$
Then there exists a map  $D:P(M)\rightarrow
L_{+}$ with the following properties:

(i) $d(e)$ is a finite function if only if the projection  $e$ is finite;

(ii) $d(e+q)=d(e)+d(q)$  for $p, q \in P(M),$ $eq=0;$

(iii) $d(uu^*)=d(u^*u)$ for every  partial isometry  $u\in M;$

(iv) $d(ze)=zd(e)$ for all $z\in P(Z(M)), \,\, e\in P(M);$

(v) if  $\{e_{\alpha}\}_{\alpha \in J}, \,\,\, e\in P(M) $ and $e_{\alpha}\uparrow e,$ then
$$d(e)=\sup \limits_{\alpha \in J}d(e_{\alpha}).$$
This map  $d:P(M)\rightarrow L_+,$ is a called the \emph{dimension functions} on  $P(M).$

Recall that for an element   $x\in LS(M)$ the projection defined as
$$
c(x)=\inf\{z\in P(Z(M)): zx=x\}
$$
 is called  \emph{the central
cover of }$x.$

\begin{remark} \label{R}

Let $M$ be a type I von Neumann algebra. If  $p, q\in P(M)$
abelian projections are faithful (i.e. with $c(p)=c(q)=\textbf{1},$) then the property
(iii) implies  that $0<d(p)(\omega)=d(q)(\omega)<\infty$ for
$\mu$-almost every  $\omega\in\Omega.$ Therefore replacing $d$ by
$d(p)^{-1}d$ we can assume that  $d(p)=c(p)$ for every faithful abelian
projection  $p\in P(M).$ Thus for all  $e\in P(M)$ we have that
$d(e)\geq c(e).$
\end{remark}

The basis of neighborhoods of zero in \emph{the topology $t(M)$ of  convergence
locally in measure}   on $LS(M)$ consists (in the above notations)
of the following sets
$$V(A,\varepsilon,\delta)=\{x\in LS(M):\exists p\in P(M), \, \exists z\in P(Z(M)),  \,
xp \in M, $$
$$ ||xp||_{M}\leq \varepsilon, \,\, z^{\bot}\in W(A,\varepsilon,\delta), \,\, d(zp^{\bot})\leq \varepsilon z\},$$
where
 $\varepsilon, \delta>0, \, A\in \Sigma, \, \mu(A)<+\infty.$

The topology  $t(M)$ is  metrizable if and only if the center  $Z(M)$
is   $\sigma$-finite (see \cite{Mur}).

Given an arbitrary  family  $\{z_i\}_{i\in I}$ of mutually orthogonal
central projections in $M$ with $\bigvee\limits_{i\in
I}z_i=\textbf{1}$ and a  family of elements $\{x_i\}_{i\in I}$ in
$LS(M)$ there exists a unique element $x\in LS(M)$ such that $z_i
x=z_i x_i$ for all $i\in I.$ This element is denoted by
$x=\sum\limits_{i\in I}z_i x_i.$

We  denote by  $E(M)$  the set of all elements  $x$ from  $LS(M)$ for which there exists a sequence of
mutually orthogonal central projections  $\{z_i\}_{i\in I}$ in  $M$ with $\bigvee\limits_{i\in I}z_i=\textbf{1},$
such that $z_i x\in M$ for all $i\in I,$ i.e.
 $$E(M)=\{x\in LS(M): \exists z_i\in P(Z(M)), z_iz_j=0, i\neq j, \bigvee\limits_{i\in I}z_i=\textbf{1},
 z_i x\in M, i\in I\},$$
where $Z(M)$ is the center of $M.$

It is known  \cite{AK1} that  $E(M)$ is  *-subalgebras in  $LS(M)$ with the center
 $S(Z(M)),$ where   $S(Z(M))$ is  the algebra of all measurable operators
 with respect to   $Z(M),$ moreover,
  $LS(M)=E(M)$ if and only if $M$ does not have
direct summands of type II.

A similar notion (i.e. the algebra $E(\mathcal{A})$) for
arbitrary *-subalgebras $\mathcal{A}\subset LS(M)$  was independently
introduced recently by M.A. Muratov and V.I. Chilin \cite{Mur1}.
The algebra  $E(M)$ is called
\textit{the central extension of} $M.$

It is known (\cite{AK1},
\cite{Mur1}) that an element
$x\in  LS(M)$ belongs to $E(M)$ if and only if there exists
 $f\in S(Z(M))$ such that    $|x|\leq f.$
Therefore for each
 $x\in E(M)$ one can define the following vector-valued norm
 \begin{equation}
 \label{norm}
 ||x||=\inf\{f\in S(Z(M)): |x|\leq f\}
\end{equation}
and this norm satisfies the following conditions:

$1) \|x\|\geq 0; \|x\|=0\Longleftrightarrow x=0;$

$2)  \|f x\|=|f|\|x\|;$

$ 3)  \|x+y\|\leq\|x\|+\|y\|;$

$4) ||x y||\leq ||x||||y||;$

$5) ||xx^{\ast}||=||x||^2$
\newline
    for all $x,y\in E(M), f\in S(Z(M)).$

\section{Topologies on the central extensions of von Neumann algebras}

 Let  $M$ be an arbitrary von Neumann algebra with the center $Z(M)\equiv L^\infty(\Omega,\Sigma, \mu).$
On the space  $E(M)$ we consider the following sets:
\begin{equation}
 \label{base1}
O(A, \varepsilon, \delta)=\left\{x \in E(M): ||x||\in W(A, \varepsilon, \delta)\right\},
\end{equation}
where  $\varepsilon ,\delta > 0,\,\,\,A \in \sum ,\,\,\,\mu \left( A \right) <
+ \infty $.

The following proposition gives elementary properties of the sets
$O(A, \varepsilon, \delta),$ which immediately follow from the corresponding
properties of the sets  $V(A, \varepsilon, \delta)$ (see \cite[Proposition 3.5.1]{Mur}).

\begin{proposition}\label{Ele} Let  $\varepsilon, \varepsilon_j>0$ and  $\delta,
\delta_j>0, j=1, 2, A\in \Sigma, \mu(A)<\infty.$ Then

i)  $\lambda O(A, \varepsilon, \delta)=O(A, |\lambda|\varepsilon, \delta)$ для всех $\lambda\in \mathbb{C},
\lambda\neq 0;$

ii)   $ O(A, \varepsilon_1, \delta_1)\subseteq O(A, \varepsilon_2, \delta_2)$ если
 $\varepsilon_1\leq\varepsilon_2, \delta_1\leq\delta_2;$

iii)   $O(A, \varepsilon_1, \delta_1)+O(A, \varepsilon_2, \delta_2)\subseteq
O(A, \varepsilon_1+\varepsilon_2, \delta_1+\delta_2);$

iv)   $O(A, \varepsilon_1, \delta_1)O(A, \varepsilon_2, \delta_2)\subseteq
O(A, \varepsilon_1\varepsilon_2, \delta_1+\delta_2);$

v)   $O^\ast(A, \varepsilon, \delta)=O(A, \varepsilon, \delta),$ where
 $O^\ast(A, \varepsilon, \delta)=\{x^{\ast}: x\in O(A, \varepsilon, \delta)\};$

vi)   $\bigcap\{O(A, \varepsilon, \delta): \varepsilon>0, \delta>0,
A\in \Sigma, \mu(A)<\infty\}=\{0\}.$
\end{proposition}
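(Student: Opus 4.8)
The plan is to reduce every item to the corresponding property of the sets $W(A,\varepsilon,\delta)$ in $L^0(\Omega,\Sigma,\mu)$ and transport it through the center-valued norm by means of properties (1)--(5). Three elementary features of $W(A,\varepsilon,\delta)$, all immediate from its definition, will be used repeatedly. First, $W$ is \emph{monotone} in both parameters: $W(A,\varepsilon_1,\delta_1)\subseteq W(A,\varepsilon_2,\delta_2)$ whenever $\varepsilon_1\le\varepsilon_2$ and $\delta_1\le\delta_2$. Second, $W$ is \emph{solid from below}: if $0\le g\le h$ $\mu$-a.e. and $h\in W(A,\varepsilon,\delta)$, then $g\in W(A,\varepsilon,\delta)$ (keep the witnessing set $B$ and note $\|g\chi_B\|_\infty\le\|h\chi_B\|_\infty\le\varepsilon$). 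Third, $W$ scales as $|\lambda|\,W(A,\varepsilon,\delta)=W(A,|\lambda|\varepsilon,\delta)$ for $\lambda\ne 0$ (keep $B$, scale the $L^\infty$-bound by $|\lambda|$). Items (i) and (ii) then fall out at once: for (i), property (2) gives $\|x\|=|\lambda|\,\|x/\lambda\|$, so $x\in\lambda O(A,\varepsilon,\delta)$ iff $\|x\|\in|\lambda|\,W(A,\varepsilon,\delta)=W(A,|\lambda|\varepsilon,\delta)$; for (ii), monotonicity of $W$ gives the inclusion after unwinding the definition of $O(A,\varepsilon,\delta)$.

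For the additive and multiplicative statements (iii) and (iv) the key step is to intersect the witnessing sets. Given $x\in O(A,\varepsilon_1,\delta_1)$ and $y\in O(A,\varepsilon_2,\delta_2)$, put $f=\|x\|$ and $g=\|y\|$ and choose $B_1,B_2\subseteq A$ with $\mu(A\setminus B_i)\le\delta_i$, $\|f\chi_{B_1}\|_\infty\le\varepsilon_1$ and $\|g\chi_{B_2}\|_\infty\le\varepsilon_2$. Setting $B=B_1\cap B_2$ gives $\mu(A\setminus B)\le\delta_1+\delta_2$, and on $B$ one has $\|(f+g)\chi_B\|_\infty\le\varepsilon_1+\varepsilon_2$ and $\|fg\chi_B\|_\infty\le\varepsilon_1\varepsilon_2$, so that $f+g\in W(A,\varepsilon_1+\varepsilon_2,\delta_1+\delta_2)$ and $fg\in W(A,\varepsilon_1\varepsilon_2,\delta_1+\delta_2)$. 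Since properties (3) and (4) yield $0\le\|x+y\|\le f+g$ and $0\le\|xy\|\le fg$, the solidity of $W$ then delivers (iii) and (iv) respectively.

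Property (v) is the only point that is not purely formal: it requires the identity $\|x^*\|=\|x\|$, which is not among (1)--(5) but follows from them. Applying property (5) to $x$ and to $x^*$ gives $\|xx^*\|=\|x\|^2$ and $\|x^*x\|=\|x^*\|^2$, while submultiplicativity (property (4)) gives $\|x^*x\|\le\|x^*\|\,\|x\|$ and $\|xx^*\|\le\|x\|\,\|x^*\|$; reading these pointwise on $\Omega$ (and noting that where $\|x^*\|=0$ the inequality $\|x^*\|\le\|x\|$ is trivial) we obtain $\|x^*\|\le\|x\|$ and, symmetrically, $\|x\|\le\|x^*\|$, hence $\|x^*\|=\|x\|$. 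Consequently $x\in O(A,\varepsilon,\delta)$ iff $\|x^*\|\in W(A,\varepsilon,\delta)$ iff $x^*\in O(A,\varepsilon,\delta)$, which is exactly $O^*(A,\varepsilon,\delta)=O(A,\varepsilon,\delta)$.

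Finally, for (vi) the inclusion $\{0\}\subseteq\bigcap O(A,\varepsilon,\delta)$ is clear, since $\|0\|=0$ lies in every $W(A,\varepsilon,\delta)$ (take $B=A$). For the reverse inclusion, if $x$ belongs to every $O(A,\varepsilon,\delta)$ then $\|x\|\in\bigcap\{W(A,\varepsilon,\delta):\varepsilon,\delta>0,\,A\in\Sigma,\,\mu(A)<\infty\}$; as the sets $W(A,\varepsilon,\delta)$ form a basis of neighborhoods of zero for the \emph{Hausdorff} topology of convergence locally in measure on $L^0(\Omega,\Sigma,\mu)$ (using the direct sum property of $\mu$ to exhibit a finite-measure set on which a nonzero $\|x\|$ exceeds some $\varepsilon>0$), this intersection is $\{0\}$, whence $\|x\|=0$ and $x=0$ by property (1). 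The only genuinely non-formal ingredient in the whole proposition is the equality $\|x^*\|=\|x\|$ required for (v); every other item is a direct transcription of a property of $W$ through the center-valued norm, which is precisely why the statement follows immediately from the analogous facts for the sets $V(A,\varepsilon,\delta)$.
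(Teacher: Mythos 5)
Your proof is correct, and it matches the paper in spirit: the paper gives no written proof at all, remarking only that the properties ``immediately follow from the corresponding properties of the sets $V(A,\varepsilon,\delta)$'' with a citation to Muratov--Chilin, so your reduction through the center-valued norm to the elementary properties of the sets $W(A,\varepsilon,\delta)$ (monotonicity, solidity from below, scaling, intersecting witness sets for (iii) and (iv)) is precisely the routine verification the paper leaves implicit. The one place where you supply something genuinely beyond ``immediate'' is (v): the identity $\|x^*\|=\|x\|$ is not among the listed norm axioms 1)--5), and your pointwise derivation of it from $\|xx^*\|=\|x\|^2$, $\|x^*x\|=\|x^*\|^2$ and submultiplicativity (legitimate, since these are elements of $S(Z(M))\cong L^0(\Omega,\Sigma,\mu)$) is a valid and worthwhile addition; likewise your appeal to the Hausdorff property of convergence locally in measure for (vi) correctly pins down the only analytic input needed there.
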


From Proposition \ref{Ele} it follows that the system  of sets
\begin{equation}
 \label{base2}
\{x+O(A, \varepsilon, \delta)\},
\end{equation}
where $x\in  E(M), \varepsilon>0, \delta>0,
A\in \Sigma, \mu(A)<\infty$, defines on  $E(M),$ a Hausdorff vector topology $t_c(M),$
for which the sets (\ref{base2}) form the base of neighborhoods of the element $x\in E(M).$
Moreover  in this topology the   involution is continuous and the multiplication  is jointly continuous, i.e. $(E(M), t_c(M))$ is a topological *-algebra.
From \cite[Proposition 5.3]{AK2} it follows that $(E(M), t_c(M))$ is  complete.

Thus we obtain the following result.

\begin{proposition}\label{A1}
i)   $(E(M), t_c(M))$ is a complete topological *-algebra;

ii) $M$ is a  $t_c(M)$-dense in  $E(M).$
\end{proposition}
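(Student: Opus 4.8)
Almost everything needed for the first assertion has already been assembled in the paragraph preceding the proposition, so I would simply organize those observations into a proof. From Proposition \ref{Ele}(i)--(iii) and (vi) the family $\{x+O(A,\varepsilon,\delta)\}$ satisfies the axioms for a base of neighborhoods of a Hausdorff vector topology on $E(M)$, which is $t_c(M)$; parts (iv) and (v) of the same proposition yield, respectively, joint continuity of the multiplication and continuity of the involution, so that $(E(M),t_c(M))$ is a topological *-algebra. Completeness is then quoted directly from \cite[Proposition 5.3]{AK2}. Thus i) requires no new argument beyond citing these facts.

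\textbf{Plan for ii).} For the second assertion the plan is to exhibit, for an arbitrary $x\in E(M)$, an explicit sequence in $M$ converging to $x$ in $t_c(M)$. I would start from the center-valued norm: put $f=\|x\|\in S(Z(M))$ and recall that, under the identification $S(Z(M))\cong L^{0}(\Omega,\Sigma,\mu)$, the function $f$ is finite $\mu$-almost everywhere and satisfies $|x|\leq f$. For each $n\in\mathbb{N}$ let $e_n\in P(Z(M))$ be the central projection corresponding to the set $\{f\leq n\}$; since $f<\infty$ a.e., one has $e_n\uparrow\textbf{1}$.

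The key step is the observation that the truncations $a_n=e_n x$ already lie in $M$. Indeed, by property $2)$ of the norm in (\ref{norm}) we have $\|a_n\|=e_n f\leq n\,\textbf{1}$, hence $|a_n|\leq\|a_n\|\leq n\,\textbf{1}$, so $a_n$ is a bounded operator affiliated with $M$ and therefore belongs to $M$. Writing $x-a_n=e_n^{\perp}x$ gives $\|x-a_n\|=e_n^{\perp}f$, a function supported on $\{f>n\}$. It then remains to check that $x-a_n$ eventually enters every basic neighborhood $O(A,\varepsilon,\delta)$ of zero. Given such a neighborhood, I would use the finiteness of $f$ together with continuity of the measure: because $\mu(A)<\infty$ and $f<\infty$ a.e., the decreasing sets $A\cap\{f>n\}$ satisfy $\mu(A\cap\{f>n\})\to 0$, so for $n$ large enough $\mu(A\cap\{f>n\})\leq\delta$. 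Taking $B=A\cap\{f\leq n\}$ yields $\mu(A\setminus B)\leq\delta$ and $(e_n^{\perp}f)\chi_B=0$, whence $\|x-a_n\|=e_n^{\perp}f\in W(A,\varepsilon,\delta)$, i.e. $x-a_n\in O(A,\varepsilon,\delta)$. This shows $a_n\to x$ in $t_c(M)$ and proves that $M$ is $t_c(M)$-dense in $E(M)$.

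\textbf{Main obstacle.} The only genuinely delicate point, and the step I would treat most carefully, is the inequality $|a_n|\leq\|a_n\|$: it requires that the infimum defining the center-valued norm in (\ref{norm}) be attained, so that $\|x\|$ is itself a central majorant of $|x|$. I would justify this from the order-completeness of $S(Z(M))\cong L^{0}(\Omega,\Sigma,\mu)$ and the spectral representation of $|x|$. Everything else reduces to a routine translation between central projections in $Z(M)$ and characteristic functions in $L^{\infty}(\Omega,\Sigma,\mu)$.
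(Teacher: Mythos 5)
Your proposal is correct and follows essentially the same route as the paper: part i) is assembled from Proposition \ref{Ele} and \cite[Proposition 5.3]{AK2} exactly as in the text, and for part ii) both arguments truncate $x$ by central projections on which the center-valued norm is bounded, the only (cosmetic) difference being that you cut along the global level sets $\{\|x\|\leq n\}$ and obtain a single sequence $e_nx\to x$, whereas the paper cuts inside the given finite-measure set $A$ (via $B_k=\{\omega\in A:\|x\|(\omega)\leq k\}$, $x_k=\chi_{B_k}x$), obtaining a neighborhood-dependent approximant. The attainment $|x|\leq\|x\|$ that you flag as the delicate point is indeed needed, but it is used implicitly by the paper as well (to conclude $\chi_{B_k}x\in M$ from $\|\chi_{B_k}x\|\leq k\mathbf{1}$), and it holds by the directedness-plus-Dedekind-completeness argument you sketch, as established in \cite{AK1} and \cite{Mur1}.
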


\begin{proof}  i) is proved above.

ii).  Let  $x\in E(M)$ and $A\in \Sigma, \, \mu(A)<\infty,$
$\varepsilon, \delta>0.$ For $n\in \mathbb{N}$ put
\[
B_n=\{\omega\in A: ||x||(\omega)\leq n\}.
\]
Since
 $\mu(A\setminus B_n)\rightarrow 0$
as $n\rightarrow\infty$ there is  $k\in \mathbb{N}$ such that
  $\mu(A\setminus B_k)\leq\varepsilon.$
Put  $x_k=\chi_{B_k}x.$ Then
  \[
  ||x_k||\leq k\textbf{1}
  \]
  and
  \[
||x-x_k||\chi_{B_k}=||x\chi_{B_k}-x_k\chi_{B_k}||=||x\chi_{B_k}-x\chi_{B_k}||=0.
\]
Thus  $x_k\in x+O(A, \varepsilon, \delta).$ This means that
$\overline{M}^{t_c(M)}=E(M).$
The proof is complete.
\end{proof}

\begin{remark}\label{R2}
Note that if  $M$ is a commutative von Neumann algebra then  $||x||=|x|$ for each $x\in E(M),$
and therefore  $O(A, \varepsilon, \delta)=W(A, \varepsilon, \delta)$
for all  $\varepsilon ,\delta > 0,\,\,\,A \in \sum ,\,\,\,\mu \left( A \right) <
+ \infty.$
Hence the topology $t_c(M)$ on $E(M)$  coincides  with
the topology of convergence locally  in measure  $t(M).$

If $M$ is a factor,
then $E(M)=M$ and $t_c(M)=t_{\|\cdot\|_M},$ where  $t_{\|\cdot\|_M}$ uniform topology on $M.$ \end{remark}

\begin{proposition}\label{Con}
i) A net  $\{p_\alpha\}\subset P(M)$
converges to zero with respect to the topology  $t_c(M)$ if and only if
$c(p_\alpha)\stackrel{t(Z(M))}\longrightarrow 0,$
where $t(Z(M))$ is the topology of convergence locally in measure  on $Z(M).$

ii) A net   $\{x_\alpha\}\subset E(M)$
converges to zero with respect to the topology $t_c(M)$ if and only if
$e_\lambda^\perp(|x_\alpha|)\stackrel{t_c(M)}\longrightarrow 0$ for any $\lambda>0,$
where  $\{e_\lambda(|x_\alpha|)\}$ is a spectral projections family for the operator  $x_\alpha.$
\end{proposition}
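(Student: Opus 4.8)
The plan is to reduce everything to the defining observation that, by \eqref{base1} together with the definition of $t_c(M)$ through the base \eqref{base2}, a net $\{x_\alpha\}\subset E(M)$ converges to zero in $t_c(M)$ if and only if the center-valued norms $\|x_\alpha\|$ converge to zero in the topology $t(Z(M))$ of convergence locally in measure on $S(Z(M))\equiv L^0(\Omega,\Sigma,\mu)$; indeed $x_\alpha\in O(A,\varepsilon,\delta)$ precisely when $\|x_\alpha\|\in W(A,\varepsilon,\delta)$. I shall use this equivalence throughout.

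For i) the first step is to prove the identity $\|p\|=c(p)$ for every $p\in P(M)$. Since $|p|=p\le c(p)$ and $c(p)\in S(Z(M))$, the definition \eqref{norm} gives $\|p\|\le c(p)$. For the reverse inequality, let $f\in S(Z(M))$ satisfy $p\le f$ and, for $0<t<1$, put $z_t=\chi_{\{f\le t\}}\in P(Z(M))$. Then $z_t p\le z_t f\le t\,z_t$, and since a nonzero projection cannot be dominated by $t\,\textbf{1}$ with $t<1$, we obtain $z_t p=0$, i.e. $p\le \textbf{1}-z_t=\chi_{\{f>t\}}$. Letting $t\uparrow 1$ yields $p\le \chi_{\{f\ge 1\}}$, hence $c(p)\le\chi_{\{f\ge 1\}}\le f$; taking the infimum over all such $f$ gives $c(p)\le\|p\|$. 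Combining this identity with the reformulation above, $p_\alpha\to 0$ in $t_c(M)$ if and only if $\|p_\alpha\|=c(p_\alpha)\to 0$ in $t(Z(M))$, which is exactly i).

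The heart of ii) is the identity
\begin{equation*}
\big\|e_\lambda^\perp(|x|)\big\|=c\big(e_\lambda^\perp(|x|)\big)=\chi_{\{\|x\|>\lambda\}},\qquad x\in E(M),\ \lambda>0,
\end{equation*}
whose first equality is the case just proved. Writing $f=\|x\|$, the inclusion ``$\le$'' follows by noting that on $z=\chi_{\{f\le\lambda\}}$ one has $z|x|\le zf\le\lambda z$, so $z\,e_\lambda^\perp(|x|)=0$ and therefore $e_\lambda^\perp(|x|)\le\chi_{\{f>\lambda\}}$. For ``$\ge$'', suppose the central projection $w=\chi_{\{f>\lambda\}}-c\big(e_\lambda^\perp(|x|)\big)$ is nonzero. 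Then $w\,e_\lambda^\perp(|x|)=0$ forces $w|x|\le\lambda w$, whence $|wx|=w|x|\le\lambda w$ and, by homogeneity of the center-valued norm $\|wx\|=w\|x\|$, we get $wf=\|wx\|\le\lambda w$; this contradicts $w\le\chi_{\{f>\lambda\}}$, which forces $f>\lambda$ on the nonzero support of $w$, and the identity follows.

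It remains to combine the displayed identity with the opening reformulation. By the latter, $e_\lambda^\perp(|x_\alpha|)\to 0$ in $t_c(M)$ if and only if $\chi_{\{\|x_\alpha\|>\lambda\}}\to 0$ in $t(Z(M))$, while $x_\alpha\to 0$ in $t_c(M)$ if and only if $\|x_\alpha\|\to 0$ in $t(Z(M))$. Thus ii) becomes the purely measure-theoretic assertion that, for nonnegative $f_\alpha=\|x_\alpha\|\in L^0(\Omega,\Sigma,\mu)$, one has $f_\alpha\to 0$ locally in measure if and only if $\chi_{\{f_\alpha>\lambda\}}\to 0$ locally in measure for every $\lambda>0$; both sides unwind to $\mu(\{f_\alpha>\lambda\}\cap A)\to 0$ for each $\lambda>0$ and each $A\in\Sigma$ with $\mu(A)<\infty$, which is the standard description of convergence locally in measure. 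I expect the main obstacle to be the ``$\ge$'' half of the displayed identity: it is the one place where the homogeneity $\|wx\|=w\|x\|$ and the passage from $w\,e_\lambda^\perp(|x|)=0$ to the operator inequality $w|x|\le\lambda w$ must be handled carefully; once that identity is secured, both parts of the proposition follow mechanically from the reformulation.
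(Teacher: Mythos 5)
Your proof is correct, and it is organized rather differently from the paper's. For i) the paper simply cites the identity $\|p\|=c(p)$, $p\in P(M)$, without proof; you actually supply one, via the central cut-offs $z_t=\chi_{\{f\le t\}}$ and the observation that a nonzero projection cannot lie below $t\mathbf{1}$ for $t<1$ --- a genuine, if small, addition. For ii) the paper performs a direct two-sided neighbourhood chase: in the forward direction it fixes $0<\varepsilon<\lambda/2$, extracts $B_\alpha$ from $\|x_\alpha\|\in W(A,\varepsilon,\delta)$, deduces $|x_\alpha|\chi_{B_\alpha}\le\varepsilon\chi_{B_\alpha}$ and hence $c\bigl(e_\lambda^\perp(|x_\alpha|)\bigr)\chi_{B_\alpha}=0$; in the converse it uses $\bigl\|c\bigl(e_\varepsilon^\perp(|x_\alpha|)\bigr)\chi_{B_\alpha}\bigr\|_M\le\varepsilon<1$ to kill the central cover on $B_\alpha$ and recover $\|x_\alpha\|\chi_{B_\alpha}\le\varepsilon\chi_{B_\alpha}$. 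You instead isolate the exact formula $c\bigl(e_\lambda^\perp(|x|)\bigr)=\chi_{\{\|x\|>\lambda\}}$ and reduce both implications at once to the scalar fact that $f_\alpha\to 0$ locally in measure iff $\mu(\{f_\alpha>\lambda\}\cap A)\to 0$ for every $\lambda>0$ and every $A$ of finite measure. The two arguments rest on the same kernel --- for a central projection $z$, $z|x|\le\lambda z$ iff $z\,e_\lambda^\perp(|x|)=0$, together with $\|q\|=c(q)$ on projections --- but your packaging buys a reusable identity and a uniform treatment of both directions, while the paper's version is more pedestrian and stays entirely inside the $W(A,\varepsilon,\delta)$ formalism. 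Two points you handle correctly but should keep explicit: the ``$\ge$'' half of your identity needs the ``$\le$'' half first, so that $w=\chi_{\{\|x\|>\lambda\}}-c\bigl(e_\lambda^\perp(|x|)\bigr)$ is indeed a central projection; and your step $z|x|\le z\|x\|$ tacitly uses that the infimum in \eqref{norm} is attained as a minimum, i.e. $|x|\le\|x\|$ --- the standard fact from \cite{AK1}, \cite{Mur1} which the paper also uses implicitly.
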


\begin{proof} i) The proof immediately follows from the definition of the topology
$t_c(M)$ and the equality  $||p||=c(p),\, p\in P(M).$

ii) Let  $x_\alpha\stackrel{t_c(M)}\longrightarrow 0$ and $\lambda>0.$ Take any
$A\in \Sigma, \mu(A)<\infty, 0<\varepsilon<\lambda/2, \delta>0.$
Since  $||x_\alpha||\stackrel{t(M)}\longrightarrow 0$, then there exists
 $\alpha_0$ such that $||x_\alpha||\in W(A, \varepsilon, \delta)$ for each  $\alpha\geq\alpha_0.$
Therefore  there exists $B_\alpha\in \Sigma, B_\alpha\subseteq A$
such that  $\mu(A\setminus B_\alpha)\leq\delta,$
$||||x_\alpha||\chi_{B_{\alpha}}||_M\leq\varepsilon.$ Thus
$|||x_\alpha|\chi_{B_{\alpha}}||_M\leq\varepsilon,$ i.e.
$|x_\alpha|\chi_{B_{\alpha}}\leq\varepsilon\chi_{B_{\alpha}}.$
Since  $\varepsilon<\frac{\textstyle \lambda}{\textstyle 2}$ then from the last inequality
we have that
 $c(e_\lambda^\perp(|x_\alpha|))\chi_{B_{\alpha}}=0.$  The inequality
 $\mu(A\setminus B_\alpha)\leq\delta$ implies that
  $c(e_\lambda^\perp(|x_\alpha|))\in W(A, \varepsilon, \delta),$
i.e. $c(e_\lambda^\perp(|x_\alpha|))\stackrel{t(Z(M))}\longrightarrow 0.$ Thus
 $e_\lambda^\perp(|x_\alpha|)\stackrel{t_c(M)}\longrightarrow 0.$

Now let   $e_\varepsilon^\perp(|x_\alpha|)\stackrel{t_c(M)}\longrightarrow 0$ and  $0<\varepsilon<1, \delta>0.$
Then  $c(e_\varepsilon^\perp(|x_\alpha|))\stackrel{t(M)}\longrightarrow 0.$
Therefore there exists  $\alpha_0$ such that
 $c(e_\varepsilon^\perp(|x_\alpha|))\in W(A, \varepsilon, \delta)$ for all
 $\alpha\geq\alpha_0.$
Hence there exists $B_\alpha\in \Sigma, B_\alpha\subseteq A$
such that  $\mu(A\setminus B_\alpha)\leq\delta,$
$||c(e_\varepsilon^\perp(|x_\alpha|))\chi_{B_{\alpha}}||_M\leq\varepsilon<1.$ Thus
$c(e_\varepsilon^\perp(|x_\alpha|))\chi_{B_{\alpha}}=0,$ i.e.
$|x_\alpha|\chi_{B_{\alpha}}\leq\varepsilon\chi_{B_{\alpha}}.$
Therefore
\[
||||x_\alpha||\chi_{B_{\alpha}}||_M\leq\varepsilon
\]
and
\[
\mu(A\setminus B_\alpha)\leq\delta.
\]
Thus
  $||x_\alpha||\in W(A, \varepsilon, \delta),$
  i.e.
   $||x_\alpha||\stackrel{t(Z(M))}\longrightarrow 0.$ Therefore
 $x_\alpha\stackrel{t_c(M)}\longrightarrow 0.$
The proof is complete.
\end{proof}

Let  $t(M)$ denote  the topology on $E(M)$ induced by the topology $t(M)$ from $LS(M).$

\begin{proposition}\label{Com}
The topology $t_c(M)$ is stronger than
the topology $t(M)$ of convergence locally in measure.
\end{proposition}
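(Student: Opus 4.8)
The plan is to show that every basic $t(M)$-neighborhood of zero in $E(M)$ contains a $t_c(M)$-neighborhood of zero, which establishes that $t_c(M)$ has at least as many open sets near $0$ as $t(M)$; since both are vector topologies this suffices for $t_c(M)\supseteq t(M)$. Concretely, I would fix a basic neighborhood $V(A,\varepsilon,\delta)\cap E(M)$ coming from the $t(M)$-definition and produce $A',\varepsilon',\delta'$ with $O(A',\varepsilon',\delta')\subseteq V(A,\varepsilon,\delta)$.

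The key computation is to take $x\in O(A,\varepsilon,\delta)$, so that $\|x\|\in W(A,\varepsilon,\delta)$, and extract the data required by $V(A,\varepsilon,\delta)$. By definition of $W$ there is $B\subseteq A$ with $\mu(A\setminus B)\leq\delta$ and $\|x\|\chi_B\in L^\infty$ with $\bigl\||x|\chi_B\bigr\|_\infty\leq\varepsilon$. Identifying $\chi_B$ with the central projection $z=\chi_B\in P(Z(M))$, I would set $p=z$ (or more carefully the support projection on which $\|x\|$ is bounded). From $\||x|\chi_B\|\leq\varepsilon$ and the definition $\|x\|=\inf\{f:|x|\leq f\}$ one gets $\|xp\|_M\leq\varepsilon$ with $p\in P(M)$, while $z^{\perp}=\chi_{A\setminus B}$ has $\mu(A\setminus B)\leq\delta$ so $z^{\perp}\in W(A,\varepsilon,\delta)$. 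The remaining clause $d(zp^{\perp})\leq\varepsilon z$ I would arrange by choosing $p$ to be a genuine projection majorizing the spectral projection $e_\varepsilon(|x|)$ on the bounded region; on the region $B$ one has $|x|\chi_B\leq\varepsilon\chi_B$, forcing the tail spectral projection $e_\varepsilon^{\perp}(|x|)$ to vanish centrally there, exactly as in the proof of Proposition~\ref{Con}(ii). Part~(ii) of that proposition is in fact the engine here: it translates $t_c(M)$-smallness of $x$ into $t_c(M)$-smallness of the spectral tails $e_\lambda^{\perp}(|x|)$, which is precisely the information the $t(M)$-neighborhoods record.

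I would therefore organize the argument as: first invoke Proposition~\ref{Con}(ii) to reduce membership in $V(A,\varepsilon,\delta)$ to control of the central covers $c(e_\lambda^{\perp}(|x|))$; then verify that the center-valued estimate $\|x\|\in W(A,\varepsilon',\delta')$ (for suitably chosen $\varepsilon',\delta'$, say $\varepsilon'=\min(\varepsilon,1)$ and $\delta'=\delta$) simultaneously delivers the norm bound $\|xp\|_M\leq\varepsilon$, the measure bound on $z^{\perp}$, and the dimension bound $d(zp^{\perp})\leq\varepsilon z$. Since $t(M)$ and $t_c(M)$ are both translation-invariant, comparing the bases of neighborhoods of $0$ is enough to conclude that the identity map $(E(M),t_c(M))\to(E(M),t(M))$ is continuous, i.e.\ $t_c(M)\geq t(M)$.

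The main obstacle I expect is the dimension-function clause $d(zp^{\perp})\leq\varepsilon z$, which is the one genuinely $M$-level (as opposed to center-level) condition in the definition of $V(A,\varepsilon,\delta)$. The center-valued norm $\|x\|$ only sees the scalar majorant $f\in S(Z(M))$ and carries no information about the relative dimension of spectral projections of $x$ inside $M$; so the delicate point is to confirm that on the set $B$ where $|x|\chi_B\leq\varepsilon\chi_B$ the relevant projection $p^{\perp}$ is, after cutting by $z=\chi_B$, actually zero (or at least has central cover controlled by $\varepsilon z$), which is what makes $d(zp^{\perp})\leq\varepsilon z$ hold trivially. Once that vanishing is pinned down via the spectral-projection argument of Proposition~\ref{Con}(ii), the rest is the routine bookkeeping of matching $\varepsilon$'s and $\delta$'s between the $W$- and $V$-neighborhoods.
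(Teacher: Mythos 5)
Your proposal is correct and is essentially the paper's own proof: the whole content is the single inclusion $O(A,\varepsilon,\delta)\subseteq V(A,\varepsilon,\delta)$, witnessed by taking $z=p=\chi_B$, which gives $xp\in M$ with $\|xp\|_M\le\varepsilon$, $z^\perp\in W(A,\varepsilon,\delta)$ (note the small slip: $z^\perp=\chi_{\Omega\setminus B}$, not $\chi_{A\setminus B}$, though membership holds because $z^\perp\chi_B=0$), and $zp^\perp=\chi_B\chi_B^\perp=0$, so the clause $d(zp^\perp)\le\varepsilon z$ is trivially satisfied. Your advertised detour through Proposition~\ref{Con}(ii) and spectral projections, and the rescaling $\varepsilon'=\min(\varepsilon,1)$, are unnecessary --- once $p$ is taken to be the central projection $\chi_B$ itself, no spectral or dimension-function argument is needed, exactly as in the paper.
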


\begin{proof}   It is sufficient to show that
\begin{equation}
 \label{baza}
O(A, \varepsilon, \delta) \subset V(A, \varepsilon, \delta).
\end{equation}

Let  $x \in O(A, \varepsilon, \delta),$
i.e. $||x|| \in W(A, \varepsilon, \delta).$
Then there exists  $B \in \Sigma$ such that
 \[
 B \subseteq A,\,\,\,\mu(A\setminus
  B) \le \delta,
  \]
 and
\[
||x||\chi_B \in L^\infty(\Omega, \Sigma, \mu),\,\,||\|x\|\chi_B||_{M}\le \varepsilon.
\]

Put  $z =p=\chi_B.$
Then  $||xp||=||x\chi_B||=||x||\chi_B\in L^\infty(\Omega, \Sigma, \mu),$ i.e. $xp
\in M$ and moreover $||xp||_M \le \varepsilon.$ Since  $\mu(A\setminus B) \le \delta$ and
$z^\perp \chi_B = \chi _B^\perp \chi_B =
0,$ one has $z^\perp\in W(A, \varepsilon, \delta).$ Therefore
\[
||xp||_M\leq\varepsilon,\,\,z^\perp\in W(A, \varepsilon, \delta),\,\, zp^\perp=\chi _B \chi_B^\perp=0
\]
and hence
$
x \in V(A, \varepsilon, \delta),
$
i.e. $O(A, \varepsilon, \delta) \subset V(A, \varepsilon, \delta).$ The proof
is complete. \end{proof}

\begin{proposition}\label{BASA}
If $M$ is a type  I or III  von Neumann algebra and
$0<\varepsilon<1,$ then
\[
O(A, \varepsilon, \delta)=V(A, \varepsilon, \delta).
\]
\end{proposition}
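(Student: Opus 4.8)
The plan is to prove the reverse inclusion $V(A,\varepsilon,\delta)\subseteq O(A,\varepsilon,\delta)$, since Proposition~\ref{Com} already gives $O(A,\varepsilon,\delta)\subseteq V(A,\varepsilon,\delta)$ for an arbitrary von Neumann algebra. So let $x\in V(A,\varepsilon,\delta)$: there exist $p\in P(M)$ and $z\in P(Z(M))$ with $xp\in M$, $\|xp\|_M\le\varepsilon$, $z^{\perp}\in W(A,\varepsilon,\delta)$, and $d(zp^{\perp})\le\varepsilon z$. The goal is to produce a single central projection / function witness $B\in\Sigma$ with $\mu(A\setminus B)\le\delta$ and $\big\||x|\big\|\chi_B\le\varepsilon$ (equivalently $\|x\|\in W(A,\varepsilon,\delta)$). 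The key point where the type~I/III hypothesis enters is the inequality from Remark~\ref{R}: for such algebras the dimension function satisfies $d(e)\ge c(e)$ for every $e\in P(M)$. Hence the condition $d(zp^{\perp})\le\varepsilon z$ with $0<\varepsilon<1$ forces $c(zp^{\perp})\le\varepsilon z<z$, and since $c(zp^{\perp})$ is a projection dominated by $\varepsilon z$, it must vanish: $c(zp^{\perp})=0$, i.e. $zp^{\perp}=0$, so $zp=z$ and $z\le p$.

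First I would extract from $z\le p$ the estimate on the center-valued norm. Because $z$ is central and $z\le p$, we have $|x|z=|x|pz\cdot$(the relevant bounded part), more precisely $xz=xpz\in M$ with $\|xz\|_M\le\|xp\|_M\le\varepsilon$, and similarly $|x|z\le\varepsilon z$ as elements of $M$. Taking center-valued norms and using property~(2) of the norm, one gets $\|x\|z=\big\||x|z\big\|\le\varepsilon z$, so on the support of $z$ the function $\|x\|$ is bounded by $\varepsilon$. Next I would identify the measurable set witnessing membership in $W$. Under the identification $Z(M)\cong L^{\infty}(\Omega,\Sigma,\mu)$ the central projection $z$ corresponds to $\chi_C$ for some $C\in\Sigma$, and I would set $B:=A\cap C$, so that $B\subseteq A$ and $\chi_B=z\chi_A$. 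The bound just obtained gives $\|x\|\chi_B\le\varepsilon\chi_B$, hence $\|x\|\chi_B\in L^{\infty}$ with $L^{\infty}$-norm $\le\varepsilon$.

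It remains to check the measure condition $\mu(A\setminus B)\le\delta$. Here I would invoke the hypothesis $z^{\perp}\in W(A,\varepsilon,\delta)$: by definition of $W$ there is $B'\subseteq A$ with $\mu(A\setminus B')\le\delta$ and $\|z^{\perp}\chi_{B'}\|_{L^{\infty}}\le\varepsilon<1$, which (as $z^{\perp}$ is a projection, i.e. a $\{0,1\}$-valued function) forces $z^{\perp}\chi_{B'}=0$, so $B'\subseteq C$ up to a null set and thus $B'\subseteq A\cap C=B$. Consequently $\mu(A\setminus B)\le\mu(A\setminus B')\le\delta$. Combining the two displays yields $\|x\|\in W(A,\varepsilon,\delta)$, i.e. $x\in O(A,\varepsilon,\delta)$, completing the reverse inclusion and hence the equality.

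The step I expect to be the main obstacle is the passage from the dimension-function inequality $d(zp^{\perp})\le\varepsilon z$ to the honest projection equality $zp^{\perp}=0$: this is exactly where one must use that $M$ has no type~II summand, so that Remark~\ref{R}'s normalization $d\ge c$ is available and the strict bound $\varepsilon<1$ can be leveraged against the $\{0,1\}$-valued nature of $c(zp^{\perp})$. In a type~II algebra the dimension function takes a continuum of values and $d(zp^{\perp})\le\varepsilon z$ would be satisfiable by a genuinely nonzero $p^{\perp}$, so no such conclusion could be drawn — which is consistent with the equality failing there. The remaining verifications (the norm bound and the measure bound) are routine manipulations of the center-valued norm and the definition of $W$.
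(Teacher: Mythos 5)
Your proposal follows essentially the same route as the paper: reduce to the inclusion $V(A,\varepsilon,\delta)\subseteq O(A,\varepsilon,\delta)$ via Proposition~\ref{Com}, use $0<\varepsilon<1$ to force $zp^{\perp}=0$, and then convert $z\le p$ together with $z^{\perp}\in W(A,\varepsilon,\delta)$ into the bound $\|x\|\chi_B\le\varepsilon$. Your choice $B=A\cap C$ versus the paper's reuse of the witness set $B'$ is immaterial, since $B'\subseteq A\cap C$ up to a null set; the norm estimate $z\|x\|=\|xz\|\le\varepsilon z$ and the observation that $\varepsilon<1$ forces $z^{\perp}\chi_{B'}=0$ also match the paper's computation $\|x\|\chi_B\le\|x\|z=\|xzp\|=\|xp\|\le\varepsilon$.

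The one point that needs repair is your treatment of type III. Remark~\ref{R} is stated only for type I algebras, and its normalization argument (rescaling $d$ by $d(p)^{-1}$ for a faithful abelian projection $p$) makes sense only there, since faithful abelian projections exist only in type I; so you cannot cite it to obtain $d(e)\ge c(e)$ in the type III case. The inequality does happen to hold in type III, but for a different and more elementary reason, which is exactly how the paper argues: $d(zp^{\perp})\le\varepsilon z$ means $d(zp^{\perp})$ is a finite function, so by property (i) of the dimension function the projection $zp^{\perp}$ is finite, and a type III algebra has no nonzero finite projections, whence $zp^{\perp}=0$ directly. With that one-line substitution your argument is complete and coincides with the paper's; everything else, including the step that a projection $c(zp^{\perp})\le\varepsilon z$ with $\varepsilon<1$ must vanish, is correct as you wrote it.
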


\begin{proof}  From above  \eqref{baza} we have that
$O(A, \varepsilon, \delta) \subset V(A, \varepsilon, \delta).$
Therefore it is sufficient to show that  $V(A, \varepsilon, \delta)
\subset O (A, \varepsilon, \delta).$

Let  $x \in V(A, \varepsilon, \delta).$ Then there exist  $p
\in P(M)$ and $z \in P(Z(M))$
such that
\[
xp \in M,\,\,\,\,||xp||_M \le \varepsilon
,\,\,\,\,z^\perp \in W(A, \varepsilon, \delta),\,\,\,d(zp^\perp)\le \varepsilon z.
\]

If    $M$ is of type  I then Remark \ref{R}  implies that  $d(zp^\perp)\geq c(zp^\perp).$
Now from   $d(zp^\perp) \le \varepsilon z$ it follows that
 $c(zp^\perp)\leq\varepsilon z.$
From  $0<\varepsilon<1$ we obtain that    $zp^\perp=0.$

If  $M$ is of type  III then the finiteness of the projection
  $zp^\perp$ implies that
 $zp^\perp=0.$

 Thus
 $z = zp.$
Put $z=\chi_E$ for an appropriate $E\in \Sigma.$ Since
 $z^\perp \in W(A, \varepsilon, \delta)$
one has that  $\chi_{\Omega \setminus E} \in W(A, \varepsilon, \delta).$
  Thus there exists  $B \in \Sigma $ such that  $B \subseteq
A,\,\,\,\,\mu(A\setminus B) \le \delta,$ $|\chi_{\Omega \setminus E}\chi _B| \le \varepsilon < 1.$
 Hence   $\chi_B\leq \chi_E.$ So we obtain
\[
||x||\chi_B \le ||x||\chi_E=||x||z=||xz||=||xzp||=||xp|| \le \varepsilon.
\]
This means that  $x \in O(A, \varepsilon, \delta).$
The proof is complete. \end{proof}

Proposition \ref{BASA} implies that following

\begin{theorem}\label{IorIII}
If  $M$ is a type I or III von Neumann algebra then the topologies $t(M)$ and $t_c(M)$
coincide.
\end{theorem}

\begin{proposition}\label{II} If  $M$ is of type II then  $t(M)<t_c(M).$
\end{proposition}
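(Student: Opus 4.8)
The plan is to show $t(M) < t_c(M)$, meaning the two topologies are comparable with $t_c(M)$ strictly stronger. Since Proposition~\ref{Com} already gives $t(M) \leq t_c(M)$ (i.e.\ $O(A,\varepsilon,\delta) \subset V(A,\varepsilon,\delta)$), what remains is to prove the inequality is \emph{strict} when $M$ is of type II. The strategy is to exhibit an explicit net (or sequence) of elements of $E(M)$, or a single neighborhood of zero, witnessing that the inclusion cannot be reversed: I must produce elements that are $t(M)$-small but not $t_c(M)$-small. The natural source of such elements is precisely the feature that distinguishes type II from types I and III in the proof of Proposition~\ref{BASA}.

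First I would recall why the equality $O(A,\varepsilon,\delta) = V(A,\varepsilon,\delta)$ failed for type II. In that proof the key step was deducing $zp^\perp = 0$ from the condition $d(zp^\perp) \leq \varepsilon z$ together with $0 < \varepsilon < 1$. For type I this used Remark~\ref{R} ($d \geq c$, so $c(zp^\perp) \leq \varepsilon z < z$ forces $zp^\perp = 0$), and for type III it used that nonzero projections are infinite so finiteness of $zp^\perp$ forces it to vanish. In type II \emph{neither} mechanism works: there exist nonzero finite projections $q \leq z$ (say with central cover $c(q) = z$) whose dimension $d(q)$ can be made arbitrarily small in the $L_+$-sense, so one can arrange $d(q) \leq \varepsilon z$ with $q \neq 0$. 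This is the whole point of type II and is exactly what I would exploit.

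Concretely, I would fix $A \in \Sigma$ with $\mu(A) < \infty$ and a small $\varepsilon \in (0,1)$, and pick a nonzero finite projection $p^\perp = q$ with $c(q) = \mathbf{1}$ (or supported on a central piece of positive measure) and $d(q) \leq \varepsilon \mathbf{1}$, which type II permits. Then the element $x = \lambda q$ for a large scalar $\lambda$ (or $x$ with $|x|$ a large multiple of $q$) satisfies $xp = x(\mathbf{1} - q) = 0 \in M$ with $\|xp\|_M = 0 \leq \varepsilon$, and $d(zp^\perp) = d(zq) \leq \varepsilon z$, so $x \in V(A,\varepsilon,\delta)$ for every $\delta > 0$. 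On the other hand $\|x\| = \lambda c(q) = \lambda \mathbf{1}$ (using $\|q\| = c(q)$), which lies in no $W(A,\varepsilon,\delta)$ once $\lambda > \varepsilon$, so $x \notin O(A,\varepsilon,\delta)$. This single element shows $O(A,\varepsilon,\delta) \subsetneq V(A,\varepsilon,\delta)$, hence $t_c(M)$ has strictly more open sets and $t(M) < t_c(M)$.

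The main obstacle I anticipate is the careful construction of the finite projection $q$ with the two competing properties: it must be \emph{nonzero} (so that $x \neq 0$ witnesses the failure) yet have dimension dominated by $\varepsilon z$, and I must be sure its central cover $c(q)$ is large enough that $\|x\| = \lambda c(q)$ genuinely escapes every $W(A,\varepsilon,\delta)$ on a set of positive $\mu$-measure inside $A$. Guaranteeing $c(q)$ meets $A$ in positive measure requires invoking the type II structure theorem to produce, on the central support corresponding to $A$, a finite projection of arbitrarily small relative dimension but nonzero central cover; this is standard (type II$_1$ has a faithful center-valued trace, type II$_\infty$ reduces to it), but stating it cleanly is the delicate part. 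Once $q$ is in hand, the verification that $x \in V(A,\varepsilon,\delta) \setminus O(A,\varepsilon,\delta)$ is a direct computation using the properties of $d$ and the identity $\|q\| = c(q)$ already established.
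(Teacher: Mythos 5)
Your construction is essentially the paper's own proof. The paper takes a decreasing sequence of projections $\{p_n\}$ with $c(p_n)=\mathbf{1}$ and $d(p_n)=\frac{1}{2^n}\mathbf{1}$ (obtained by halving a finite projection of full central support, exactly the type II feature you identify), shows $p_n\in V(A,\varepsilon,\delta)$ eventually via the witnesses $z=\mathbf{1}$, $p=p_k^\perp$ with $\frac{1}{2^k}<\varepsilon$, and blocks $t_c(M)$-convergence by $\|p_n\|=c(p_n)=\mathbf{1}$; your projection $q$ with $c(q)=\mathbf{1}$, $d(q)\leq\varepsilon\mathbf{1}$, together with the choices $p=q^\perp$, $z=\mathbf{1}$, is the same mechanism, and the scalar $\lambda$ is unnecessary since $q$ itself already has $\|q\|=\mathbf{1}$.

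Two details need tightening, both easily fixed with what you have. First, $\lambda\mathbf{1}\notin W(A,\varepsilon,\delta)$ only when $0<\delta<\mu(A)$; otherwise $B=\emptyset$ is an admissible choice in the definition of $W(A,\varepsilon,\delta)$, so you must fix a target neighborhood with $\delta<\mu(A)$. Second, your closing inference --- one strict inclusion $O(A,\varepsilon,\delta)\subsetneq V(A,\varepsilon,\delta)$, ``hence $t_c(M)$ has strictly more open sets'' --- is too quick: a failed inclusion between \emph{corresponding} basic neighborhoods does not by itself rule out some other $V(A',\varepsilon',\delta')$ sitting inside $O(A,\varepsilon,\delta)$, in which case the topologies would still agree. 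But your construction is uniform in the parameters: since $\|q_\varepsilon\|=\mathbf{1}$ for every $\varepsilon$, fix one $O_0=O(A_0,\varepsilon_0,\delta_0)$ with $\varepsilon_0<1$ and $0<\delta_0<\mu(A_0)$; then every basic $t(M)$-neighborhood $V(A,\varepsilon,\delta)$ contains some $q_{\varepsilon'}$ with $\varepsilon'\leq\varepsilon$ that lies outside $O_0$, so $O_0$ is not a $t(M)$-neighborhood of zero, and with Proposition~\ref{Com} this gives $t(M)<t_c(M)$. This uniform version is precisely the paper's sequence formulation, so the gap is one of packaging rather than substance.
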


\begin{proof} Since $M$ is a type II then there exists a decreasing sequence  of
projections $\{p_n\}$ in  $M$
such that $c(p_n)=\textbf{1}$ and  $d(p_n)=\frac{\textstyle 1}{\textstyle 2^n}$
for all $n\in \mathbb{N}.$ Then  $\{p_n\}$ converges to zero with respect to the topology locally in measure. Indeed
 take any  neighborhood  of zero $V(A, \varepsilon, \delta)$ in the topology $t(M).$
Put $z=\textbf{1}, p=p_k^\perp,$
where the number $k$ is such that  $\frac{\textstyle 1}{\textstyle 2^k}<\varepsilon.$
For
$n\geq k$
we have that
\[
p_np=p_n p_k^\perp=(p_n p_k)p_k^\perp=0,
\]
\[
 z^\perp\in W(A,\varepsilon, \delta)
 \]
 and
 \[
 d(zp^\perp)=d(p_k)=\frac{\textstyle 1}{\textstyle 2^k}\textbf{1}\leq\varepsilon z.
\]
This means that  $p_n\in V(A,\varepsilon, \delta)$ for all $n\geq k,$ i.e.
$\{p_n\}$ converges to zero with respect to the topology locally in measure.

On the other hand  the equality $c(p_n)=\textbf{1}$
implies that $||p_n||=\textbf{1}.$ Thus a sequence  $\{p_n\}$
does not converges to zero in the topology  $t_c(M).$ Hence
$t(M)<t_c(M).$ The proof is complete.
\end{proof}

Theorem \ref{IorIII} and Proposition \ref{II} imply the following
result
which describes the class of von Neumann algebras $M$ for which the topologies $t(M)$ and $t_c(M)$
coincide.

\begin{theorem}\label{A2}
The following conditions on a given von Neumann algebra $M$ are equivalent:

i)  $t(M)=t_c(M);$

ii)  $M$ does not have  direct summands of type   II.
\end{theorem}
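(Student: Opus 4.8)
The plan is to prove Theorem \ref{A2} by combining the two directions already established in the preceding results, since the theorem is essentially a packaging of Theorem \ref{IorIII} and Proposition \ref{II} together with the structure theory of von Neumann algebras. The key observation is that an arbitrary von Neumann algebra $M$ admits a canonical central decomposition into its type I, type II, and type III parts via mutually orthogonal central projections $z_{\mathrm{I}}, z_{\mathrm{II}}, z_{\mathrm{III}}$ with $z_{\mathrm{I}} + z_{\mathrm{II}} + z_{\mathrm{III}} = \textbf{1}$, so that $M = z_{\mathrm{I}} M \oplus z_{\mathrm{II}} M \oplus z_{\mathrm{III}} M$. Condition (ii), that $M$ has no direct summand of type II, is precisely the statement $z_{\mathrm{II}} = 0$.

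For the implication (ii) $\Rightarrow$ (i), I would argue that when $z_{\mathrm{II}} = 0$ the algebra $M$ decomposes as a direct sum of a type I part and a type III part. First I would verify that both the topology $t(M)$ and the topology $t_c(M)$ respect the central decomposition, i.e. that a net converges in either topology if and only if each of its central components converges in the corresponding topology on the summand; this follows from the fact that the defining neighbourhood bases (\ref{base1}) and the sets $V(A,\varepsilon,\delta)$ are built from the center-valued data $\|x\|$ and $c(\cdot)$, $d(\cdot)$, which are themselves central and hence split along $z_{\mathrm{I}}, z_{\mathrm{III}}$. Then Theorem \ref{IorIII}, applied separately to the type I summand $z_{\mathrm{I}} M$ and the type III summand $z_{\mathrm{III}} M$, gives $t(z_{\mathrm{I}} M) = t_c(z_{\mathrm{I}} M)$ and $t(z_{\mathrm{III}} M) = t_c(z_{\mathrm{III}} M)$, and reassembling the two summands yields $t(M) = t_c(M)$.

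For the contrapositive of (i) $\Rightarrow$ (ii), suppose $M$ does have a type II direct summand, so $z_{\mathrm{II}} \neq 0$. Then $z_{\mathrm{II}} M$ is a type II von Neumann algebra, and Proposition \ref{II} furnishes a sequence of projections $\{p_n\} \subset P(z_{\mathrm{II}} M)$ that converges to zero in $t(z_{\mathrm{II}} M)$ but not in $t_c(z_{\mathrm{II}} M)$. Viewing each $p_n$ inside $E(M)$ (identifying $z_{\mathrm{II}} M$ with its image under the central projection $z_{\mathrm{II}}$), the same sequence then converges to zero in $t(M)$ but not in $t_c(M)$, witnessing $t(M) \neq t_c(M)$ and hence $t(M) < t_c(M)$ by Proposition \ref{Com}. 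This establishes that (i) forces $z_{\mathrm{II}} = 0$.

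The main obstacle I anticipate is the bookkeeping in the first step of each direction: namely, establishing carefully that both topologies are compatible with the type decomposition, so that convergence can be tested summand-by-summand. This requires checking that the center-valued norm $\|\cdot\|$ and the dimension function $d$ restrict correctly to each central summand (which is immediate from property (iv) of $d$ and from $\|z x\| = z\|x\|$ for central $z$), and that a neighbourhood of zero in $t_c(M)$ or $t(M)$ pulls back to a product of neighbourhoods across the summands. Once this compatibility is in hand, the proof is a short assembly of the earlier results; no genuinely new analytic estimate is needed.
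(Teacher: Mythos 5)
Your proposal is correct and follows essentially the same route as the paper, which derives Theorem \ref{A2} directly by combining Theorem \ref{IorIII} with Proposition \ref{II} (the paper states this in one line, leaving the central-decomposition bookkeeping implicit). Your additional verification that both $t(M)$ and $t_c(M)$ split along the finite central decomposition $z_{\mathrm{I}}+z_{\mathrm{III}}=\textbf{1}$ (resp.\ $z_{\mathrm{II}}+z_{\mathrm{II}}^{\perp}=\textbf{1}$) is exactly the detail the paper suppresses, and your treatment of it is sound.
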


By  $E(M)_h$ we denote the set of all selfadjoint elements in  $E(M).$
A net  $\{x_\alpha\}_{\alpha\in I}\subset E(M)_h$ is called $(o)$-convergent to  $x\in E(M)_h$ (denoted
$x_\alpha\stackrel{(o)}\longrightarrow x$), if there exist nets
$\{a_\alpha\}_{\alpha\in I}$ and  $\{b_\alpha\}_{\alpha\in I}$ in  $E(M)_h,$
 such that  $a_\alpha\leq x_\alpha\leq b_\alpha$ for each  $\alpha\in I$ and
 $a_\alpha\uparrow x,$ $b_\alpha\downarrow x.$
The strongest topology on $E(M)_h$ for which
 $(o)$-convergence of nets implies their convergence in the topology is called
 \emph{the order topology}, or
 \emph{the $(o)$-topology}, and is denoted by $t_o(M).$

Let  $t_{ch}(M)$ (respectively $t_h(M)$)
denote the topology on $E(M)_h$ induced by the topology $t_c(M)$ (respectively $t(M)$) from $E(M).$

We now  describe class of von Neumann algebras $M$ for which the topologies $t_{c}(M)$ and $t_o(M)$
coincide.

\begin{theorem}\label{A3}
(i) $t_{ch}(M)\leq t_o(M)$
if and only if  $M$ is of type I$_{fin};$

(ii) $t_{ch}(M)= t_o(M)$
if and only if $M$ is a $\sigma$-finite type  I$_{fin}$ algebra.
\end{theorem}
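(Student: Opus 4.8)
The plan is to prove both biconditionals by establishing the two topological comparisons separately and then combining them, since $t_{ch}(M)=t_o(M)$ means both $t_{ch}(M)\le t_o(M)$ and $t_o(M)\le t_{ch}(M)$. I would first tackle part (i), the comparison $t_{ch}(M)\le t_o(M)$, and then use it together with a reverse-direction argument to pin down the equality in part (ii). The key structural idea is that $(o)$-convergence of a monotone net in $E(M)_h$ should force convergence in $t_{ch}(M)$ exactly when the dimension function is \emph{finite} in the type I situation, because only then does the center-valued norm behave compatibly with order bounds.

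For the forward implication of (i), assuming $M$ is of type I$_{\mathrm{fin}}$, I would show that every $(o)$-convergent net is $t_{ch}(M)$-convergent; since $t_o(M)$ is by definition the strongest topology in which $(o)$-convergence implies convergence, this yields $t_{ch}(M)\le t_o(M)$. Concretely, given $x_\alpha\stackrel{(o)}{\longrightarrow}x$ with $a_\alpha\le x_\alpha-x\le b_\alpha$ (after translating to $x=0$), $a_\alpha\uparrow 0$, $b_\alpha\downarrow 0$, I would estimate $\|x_\alpha\|$ via the center-valued norm using the order bounds and the finiteness of $d$, invoking Remark~\ref{R} so that $d(e)\ge c(e)$ and the dimension is finite on all projections. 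For the converse, I would argue contrapositively: if $M$ has a type I$_\infty$ or type II or III summand, I would exhibit a net that is $(o)$-convergent but not $t_{ch}(M)$-convergent (for the non-type-I parts one can reuse the kind of sequence $\{p_n\}$ from Proposition~\ref{II} with $c(p_n)=\mathbf 1$, which is order-bounded appropriately yet has $\|p_n\|=\mathbf 1$), showing $t_{ch}(M)\not\le t_o(M)$.

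For part (ii), I would use (i) to reduce to type I$_{\mathrm{fin}}$ algebras and then characterize when the reverse inequality $t_o(M)\le t_{ch}(M)$ holds. The extra hypothesis needed is $\sigma$-finiteness of the center, which is precisely the condition (noted after the definition of $t(M)$) making the locally-in-measure topology metrizable; metrizability lets me pass between net convergence and sequential arguments and lets me build, from a $t_{ch}(M)$-neighborhood basis, order-bounding nets realizing $(o)$-convergence. So assuming $M$ is $\sigma$-finite type I$_{\mathrm{fin}}$, I would show that $t_{ch}(M)$-convergence (equivalently, by Theorem~\ref{IorIII}, $t_h(M)$-convergence) implies $(o)$-convergence, giving $t_o(M)\le t_{ch}(M)$, and combine with (i). For the necessity of $\sigma$-finiteness, I would show that if $Z(M)$ is not $\sigma$-finite then $t_o(M)$ is strictly finer than $t_{ch}(M)$, again by constructing a suitable net.

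The hard part will be the necessity directions and, in particular, the role of $\sigma$-finiteness in part (ii): producing an explicit net that witnesses $t_o(M)\neq t_{ch}(M)$ when the center fails to be $\sigma$-finite requires careful control of the center-valued norm against order bounds indexed by an uncountable family of orthogonal central projections. The comparison $t_{ch}(M)\le t_o(M)$ in the finite type I case is comparatively routine once the dimension-function normalization of Remark~\ref{R} is in hand, but the equality in (ii) hinges on the metrizability afforded by $\sigma$-finiteness, and establishing that $t_o(M)\le t_{ch}(M)$ can genuinely fail otherwise is where I expect the main technical obstacle to lie.
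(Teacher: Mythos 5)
There is a genuine gap: your plan treats the two pivotal analytic facts as routine estimates, when they are precisely the substantive content that the paper outsources to Muratov--Chilin \cite[Theorem 1]{MurUJM}. The paper's argument is a pure reduction: for $M$ of type I$_{fin}$ one has $E(M)=LS(M)$ by \cite[Proposition 1.1]{AK1}, hence $t_{ch}(M)=t_h(M)$ by Theorem \ref{A2} (no type II summands), after which \cite[Theorem 1 (i)]{MurUJM} gives $t_h(M)\leq t_o(M)$ and \cite[Theorem 1 (ii)]{MurUJM} gives the full characterization $t_h(M)=t_o(M)\Longleftrightarrow M$ is $\sigma$-finite --- in \emph{both} directions, so the necessity of $\sigma$-finiteness in (ii) comes for free. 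Your proposal instead promises direct proofs of these facts (``estimate $\|x_\alpha\|$ via the order bounds and the finiteness of $d$''; ``metrizability lets me build \dots order-bounding nets'') but supplies no actual argument. Neither step is routine: order bounds $b_\alpha\downarrow 0$ do not control the center-valued norm without a normality/countable-sup-property argument (and $t_c$-convergence of possibly unbounded elements of $E(M)$ needs handling via spectral truncation), and metrizability of $t(M)$ by itself does not yield $t_o(M)\leq t_h(M)$; passing from net convergence to sequences is the easy half, while showing that every $t_o$-neighborhood is a $t_h$-neighborhood is the hard half, which you defer to ``the main technical obstacle'' without resolving it. As written, the proposal is a plan whose load-bearing steps are missing.

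There is also a concrete error in your necessity argument for (i): you propose to ``reuse the kind of sequence $\{p_n\}$ from Proposition \ref{II}'' for the type III part, but that sequence is defined by $d(p_n)=\frac{1}{2^n}$, and on a type III algebra the dimension function takes only the values $0$ and $\infty$, so no such sequence exists there. The paper's construction avoids this case split entirely: if $M$ is not of type I$_{fin}$, it contains a nonzero central projection $z$ and a sequence of \emph{mutually orthogonal} projections $\{p_n\}$ with $c(p_n)=z$ (by halving on a II$_1$ summand, or orthogonal equivalent copies of the identity on a properly infinite summand); then $p_n\stackrel{(o)}\longrightarrow 0$ (bound by the tail suprema $\bigvee_{k\geq n}p_k\downarrow 0$), while $\|p_n\|=c(p_n)=z\neq 0$, so $t_{ch}(M)\not\leq t_o(M)$. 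You should either adopt this uniform witness or repair the type III case separately (e.g., $p_n=\sum_{k\geq n}q_k$ for orthogonal $q_k\sim\textbf{1}$); and for the remaining comparisons you should either cite \cite[Theorem 1]{MurUJM} as the paper does or genuinely carry out the direct proofs, including the construction, for non-$\sigma$-finite centers, of a net witnessing $t_h(M)<t_o(M)$, none of which your sketch currently contains.
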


\begin{proof} (i) Let   $t_{ch}(M)\leq t_o(M).$
If the algebra  $M$ does not has type I$_{fin}$ then there exists a nonzero projection
$z\in P(Z(M))$
and a sequence of mutually orthogonal projections
 $\{p_n\}_{n=1}^{\infty}$ in  $M$ with  $c(p_n)=z, n\in\mathbb{N}.$
Then
$p_n\stackrel{(o)}\longrightarrow 0,$
 and therefore  $p_n\stackrel{t_{ch}(M)}\longrightarrow 0.$ Hence
 $||p_n||\stackrel{t(Z(M))}\longrightarrow 0.$ Since
 $||p_n||=c(p_n)=z$ it follows that    $z=0,$ this is a contradiction with $z\neq 0.$
Hence  $M$ is a type I$_{fin}$ algebra.

Conversely let  $M$ be a type  I$_{fin}$ algebra. Then by \cite[Proposition  1.1]{AK1}
we have that  $LS(M)=E(M).$ Thus   theorem  \ref{A2} implies that
$t_{ch}(M)=t_h(M).$  Since
$t_h(M)\leq t_o(M)$ (see \cite[Theorem 1 (i)]{MurUJM}) then
$t_{ch}(M)\leq t_o(M).$

(ii) If $t_{ch}(M)= t_o(M)$ then $M$ is a type  I$_{fin}$ algebra (see  (i)).
Again using the theorem \ref{A2} we have that
 $t_{ch}(M)= t_h(M).$ Thus
  $t_{h}(M)= t_o(M).$ Now by  \cite[Theorem 1 (ii)]{MurUJM} follows that  $M$ is a $\sigma$-finite algebra.

Conversely let   $M$ be a  $\sigma$-finite type  I$_{fin}$ algebra. Then
by theorem \ref{A2} we have that
 $t_{ch}(M)= t_h(M)$ and  by  \cite[Theorem 1 (ii)]{MurUJM} we obtain that
  $t_{h}(M)= t_o(M).$ Hence
$$
t_{ch}(M)= t_h(M)= t_o(M).
$$
The proof is complete.
\end{proof}

Theorem \ref{A3} yields the  following corollary.

\begin{corollary}\label{Co} The following assertions are true:

(i) If  $M$ is  a  $\sigma$-finite von Neumann algebra but is not  type
I$_{fin},$ then  $t_{o}(M)< t_{h}(M)$;

(ii) If  $M$ is not a $\sigma$-finite von Neumann algebra but is type
I$_{fin},$ then  $t_{h}(M)< t_{o}(M).$
\end{corollary}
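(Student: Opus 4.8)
The plan is to reduce both assertions to the two one-sided comparisons between $t_h(M)$ and $t_o(M)$ that underlie the proof of Theorem \ref{A3} via \cite[Theorem 1]{MurUJM}: namely that $t_h(M)\le t_o(M)$ holds if and only if $M$ is of type I$_{fin}$, and that $t_o(M)\le t_h(M)$ holds if and only if $M$ is $\sigma$-finite. Granting these two equivalences, each part of the corollary follows by reading off one inequality from the hypothesis that is assumed and the strictness from the hypothesis that is denied.

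First I would treat (ii). Assume $M$ is type I$_{fin}$ but not $\sigma$-finite. The type I$_{fin}$ hypothesis gives $t_h(M)\le t_o(M)$, and, since a type I$_{fin}$ algebra has no direct summand of type II, Theorem \ref{A2} gives $t_{ch}(M)=t_h(M)$. For strictness I would use Theorem \ref{A3}(ii): as $M$ is not a $\sigma$-finite type I$_{fin}$ algebra, $t_{ch}(M)\ne t_o(M)$, whence $t_h(M)\ne t_o(M)$ by the identification $t_{ch}(M)=t_h(M)$. Together these give $t_h(M)<t_o(M)$.

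Next I would treat (i). Assume $M$ is $\sigma$-finite but not type I$_{fin}$. The $\sigma$-finiteness hypothesis supplies the inequality $t_o(M)\le t_h(M)$. For strictness, suppose toward a contradiction that $t_o(M)=t_h(M)$; then in particular $t_h(M)\le t_o(M)$, which by the type I$_{fin}$ equivalence would force $M$ to be of type I$_{fin}$, contradicting the hypothesis. Hence $t_o(M)<t_h(M)$.

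The only genuinely load-bearing step is the inequality $t_o(M)\le t_h(M)$ used in part (i). It cannot be imported from Theorem \ref{A3}, which compares only $t_{ch}(M)$ with $t_o(M)$: once $M$ carries a direct summand of type II the topologies $t_{ch}(M)$ and $t_h(M)$ differ by Theorem \ref{A2}, so there is no route through $t_{ch}(M)$, and the order topology must be compared with the topology of convergence locally in measure directly. This comparison is the $\sigma$-finite half of \cite[Theorem 1]{MurUJM}; establishing it amounts to showing that convergence locally in measure implies convergence in the order topology. The delicate point is that this uses $\sigma$-finiteness in an essential way (and the resulting metrizability of $t_h(M)$), together with the fact that $t_o$-convergence is strictly weaker than order convergence of nets, so one cannot argue merely at the level of order-convergent nets; everything else in the corollary is formal manipulation of the two equivalences.
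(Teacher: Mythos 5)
Your reconstruction matches what the paper actually has in mind: the paper prints no proof at all (the corollary is announced as ``Theorem \ref{A3} yields\dots''), and the intended argument is precisely the bookkeeping you perform with Theorem \ref{A2}, Theorem \ref{A3} and the two halves of \cite[Theorem 1]{MurUJM} that the paper already invokes in proving Theorem \ref{A3}. Your part (ii) is airtight relative to the paper's own resources: type I$_{fin}$ gives $E(M)=LS(M)$ and $t_{ch}(M)=t_h(M)$ via Theorem \ref{A2}, the inequality $t_h(M)\le t_o(M)$ is exactly the citation \cite[Theorem 1 (i)]{MurUJM} used in the proof of Theorem \ref{A3}(i), and strictness from Theorem \ref{A3}(ii) is a clean touch that avoids needing the precise formulation of \cite[Theorem 1 (ii)]{MurUJM}. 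Your diagnosis of part (i) is also correct and is genuinely more honest than the paper's one-liner: Theorem \ref{A3} compares $t_{ch}(M)$ with $t_o(M)$ only, and since $t_h(M)\le t_{ch}(M)$ (Proposition \ref{Com}), the failure $t_{ch}(M)\not\le t_o(M)$ for non-I$_{fin}$ algebras says nothing about $t_h(M)$ versus $t_o(M)$; the inequality $t_o(M)\le t_h(M)$ really must be imported from \cite{MurUJM}, not deduced from Theorem \ref{A3}.

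One caveat, which you gesture at but do not resolve (and which the paper glosses over entirely): \cite[Theorem 1]{MurUJM} is a statement about $LS(M)_h$, whereas the corollary's topologies $t_h(M)$ and $t_o(M)$ are defined on $E(M)_h$, and in part (i) the hypothesis ``not type I$_{fin}$'' allows type II summands --- exactly the case in which $E(M)\subsetneq LS(M)$ and the order topology on $E(M)_h$ is an intrinsic object rather than the restriction of $t_o(LS(M)_h)$. So both load-bearing imports in your part (i) --- the $\sigma$-finite inequality $t_o(M)\le t_h(M)$ and the implication ``$t_h(M)\le t_o(M)$ forces type I$_{fin}$'' used for strictness --- need to be known for $E(M)_h$, not merely for $LS(M)_h$; in part (ii) this issue evaporates because type I$_{fin}$ gives $E(M)=LS(M)$. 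Since the paper itself silently makes the same identification, your proposal is faithful to the paper's level of rigor, but a complete argument would either verify that the Muratov--Chilin counterexamples and domination constructions can be carried out inside $E(M)_h$ (with order bounds in $E(M)_h$), or reduce to summands where $E(M)=LS(M)$.
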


\begin{proposition}\label{LcM}
The topology $t_c(M)$ is locally convex if and only if $M$ is *-isomorphic to the
$C^\ast$-product $\bigoplus\limits_{j\in J}M_j,$ where $M_j$ are factors.
\end{proposition}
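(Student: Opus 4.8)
\textbf{Reduction to the center.} The whole statement turns on the atomicity of the center: writing $M$ as a $C^\ast$-product $\bigoplus_{j\in J}M_j$ of factors is \emph{equivalent} to $Z(M)$ being atomic. Indeed, if $Z(M)\cong L^\infty(\Omega,\Sigma,\mu)$ is atomic with minimal central projections $\{z_j\}_{j\in J}$ (so $\bigvee_j z_j=\mathbf{1}$), then $M=\bigoplus_j z_jM$ and $Z(z_jM)=z_jZ(M)=\mathbb{C}z_j$, whence each $z_jM$ is a factor; conversely $Z\bigl(\bigoplus_j M_j\bigr)=\bigoplus_j\mathbb{C}\cong\ell^\infty(J)$ is atomic. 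The plan is therefore to prove: $t_c(M)$ is locally convex $\iff Z(M)$ is atomic $\iff\mu$ is atomic. The key observation linking $t_c(M)$ to the center is that, by the commutative computation in Remark~\ref{R2} (for central $z$ one has $\|z\|=|z|$, so the infimum defining \eqref{norm} is attained at $|z|$), the trace of the neighborhood $O(A,\varepsilon,\delta)$ on the central subalgebra $S(Z(M))\subset E(M)$ is exactly $W(A,\varepsilon,\delta)$. Hence the topology induced by $t_c(M)$ on $S(Z(M))$ is precisely the topology of convergence locally in measure on $L^0(\Omega,\Sigma,\mu)$.

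\textbf{Sufficiency ($\mu$ atomic $\Rightarrow$ local convexity).} Let $\{z_j=\chi_{a_j}\}_{j\in J}$ be the atoms, with $\mu_j:=\mu(a_j)>0$. For a fixed atom $a_j$ and $0<\delta<\mu_j$ I would first check that
\[
O(\{a_j\},\varepsilon,\delta)=\{x\in E(M):\|z_jx\|_{M_j}\le\varepsilon\},
\]
because the only admissible choice of the set $B$ inside the single atom $a_j$ (subject to $\mu(a_j\setminus B)\le\delta<\mu_j$) is $B=a_j$, and then the defining condition reads $\|x\|(a_j)=\|z_jx\|_{M_j}\le\varepsilon$. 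Each such set is convex, being the inverse image of a ball under the linear map $x\mapsto z_jx$, and it is a $t_c(M)$-neighborhood of $0$. I would then show these convex sets generate a base. Given an arbitrary $O(A,\varepsilon,\delta)$, atomicity forces $A$ to meet only countably many atoms $\{a_j:j\in J_A\}$ with $\sum_{j\in J_A}\mu_j=\mu(A)<\infty$; choose a finite $F\subseteq J_A$ with $\sum_{j\in J_A\setminus F}\mu_j\le\delta$ and set $U=\bigcap_{j\in F}O(\{a_j\},\varepsilon,\mu_j/2)=\{x:\|z_jx\|_{M_j}\le\varepsilon\ \forall j\in F\}$. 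Then $U$ is convex, and taking $B=\bigcup_{j\in F}a_j$ one gets $\mu(A\setminus B)\le\delta$ and $\|\,\|x\|\chi_B\|_\infty=\max_{j\in F}\|z_jx\|_{M_j}\le\varepsilon$, so $U\subseteq O(A,\varepsilon,\delta)$. Thus $t_c(M)$ admits a base of convex neighborhoods of $0$ and is locally convex.

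\textbf{Necessity (local convexity $\Rightarrow \mu$ atomic).} Here I would argue by contraposition and use the reduction above. A linear subspace of a locally convex space, with the induced topology, is again locally convex (intersect a convex base with the subspace). Hence if $t_c(M)$ is locally convex, then its restriction to $S(Z(M))$—which by the first paragraph is the topology of convergence locally in measure on $L^0(\Omega,\Sigma,\mu)$—is locally convex as well. It is classical that the topology of convergence (locally) in measure on $L^0(\Omega,\Sigma,\mu)$ is locally convex \emph{only if} $\mu$ is atomic: on a non-atomic part one can split any function into finitely many pieces supported on sets of small measure and recover arbitrarily large scalar multiples by convex combinations, forcing every convex neighborhood of $0$ to be absorbing in such a strong way that it must be the whole space. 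Consequently $\mu$, and with it $Z(M)$, is atomic, which by the first paragraph means $M\cong\bigoplus_{j\in J}M_j$ with $M_j$ factors.

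\textbf{Main obstacle.} The sufficiency direction is essentially a bookkeeping argument once the single-atom slabs are identified as convex basic neighborhoods, so I expect it to be routine. The real content sits in the necessity direction, namely the non-local-convexity of convergence in measure over a non-atomic base. The careful point is to transfer this failure from the center back to $E(M)$; this is exactly what the identity $O(A,\varepsilon,\delta)\cap S(Z(M))=W(A,\varepsilon,\delta)$ and the hereditary nature of local convexity under passage to subspaces are designed to do. I would either invoke the classical measure-theoretic fact directly or reprove it on a non-atomic sub-$\sigma$-algebra of $\Sigma$, and this reproof (the averaging/splitting estimate) is the step I expect to require the most care.
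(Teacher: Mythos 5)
Your proposal is correct and follows essentially the same route as the paper: for necessity you restrict $t_c(M)$ to the center $S(Z(M))$, where it induces $t(Z(M))$, and invoke the classical fact (the paper cites Sarymsakov--Ayupov--Khadzhyev--Chilin) that local convexity of $L^0$ in measure forces atomicity, hence a factor decomposition. Your sufficiency argument is just a hands-on unpacking of the paper's one-line observation that $E(M)=\prod_{j\in J}M_j$ carries the Tychonoff product of the norm topologies $t_{\|\cdot\|_{M_j}}$ --- your finite intersections $\{x:\|z_jx\|_{M_j}\le\varepsilon,\ j\in F\}$ are exactly the product-topology base --- so the two proofs coincide in substance.
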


\begin{proof}
Let  $t_c(M)$ be a locally convex topology on $E(M).$ Since $t_c(M)$
induces the topology $t(Z(M))$ on $Z(E(M))=S(Z(M)),$
we have that $(S(Z(M)), t(Z(M)))$ is a locally convex space. It follows from
\cite[12, Ch. V, \S 3]{SXAC} that $Z(M)$ is an atomic von Neumann algebra. Hence, the algebra
 $M$ is *-isomorphic to the
$C^\ast$-product $\bigoplus\limits_{j\in J}M_j,$ where $M_j$ are factors for all $j\in J.$

Conversely, let $M=\bigoplus\limits_{j\in J}M_j,$ where $M_j$ are factors. Then
$$
E(M_j)=M_j, t_c(M)=t_{\|\cdot\|_{M_{j}}}, E(M)=\prod\limits_{j\in J}M_j
$$
and, hence the topology $t_c(M)$ is a Tychonoff product of the normed topologies
$t_{\|\cdot\|_{M_j}},$ that is, $t_c(M)$ is a locally convex topology.
The proof is complete. \end{proof}

Similarly, we obtain the following

\begin{proposition}\label{M}
The topology $t_c(M)$ can be normed if and only if $M=\bigoplus\limits_{j=1}^{n}M_j,$ where $M_j$ are factors,
$j=\overline{1, n}, n\in \mathbb{N}.$
\end{proposition}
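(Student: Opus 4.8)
The plan is to characterise normability through the Kolmogorov criterion and then to piggyback on Proposition \ref{LcM}. Recall that a Hausdorff topological vector space is normable precisely when it is locally convex and admits a bounded neighbourhood of zero, and that the restriction of a norm to a linear subspace is again a norm, so every linear subspace of a normable space is normable in the induced topology. Since normability implies local convexity, the description $M\cong\bigoplus_{j\in J}M_{j}$ of Proposition \ref{LcM} is available for free, and the whole problem reduces to deciding when the index set $J$ is finite.

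First I would dispose of the sufficiency. If $M=\bigoplus_{j=1}^{n}M_{j}$ with each $M_{j}$ a factor, then, exactly as recorded in the proof of Proposition \ref{LcM}, one has $E(M_{j})=M_{j}$, $E(M)=\prod_{j=1}^{n}M_{j}$, and $t_{c}(M)$ is the product of the norm topologies $t_{\|\cdot\|_{M_{j}}}$. A finite product of normed algebras is normed, for instance by $\|(x_{1},\dots,x_{n})\|=\max_{1\le j\le n}\|x_{j}\|_{M_{j}}$, so $t_{c}(M)$ is normable.

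For the necessity, assume $t_{c}(M)$ is normable. Being in particular locally convex, it yields $M=\bigoplus_{j\in J}M_{j}$ with each $M_{j}$ a factor, so that $Z(M)=\bigoplus_{j\in J}\mathbb{C}$ is atomic and $S(Z(M))=Z(E(M))$ is identified with $\mathbb{C}^{J}$ carrying the topology of coordinatewise convergence, which is exactly $t(Z(M))$ on an atomic centre. As $t_{c}(M)$ induces $t(Z(M))$ on the linear subspace $S(Z(M))$, that product space must itself be normable, and it remains only to force $J$ to be finite.

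The hard part, and the real content of the statement, is the dichotomy that $\mathbb{C}^{J}$ with the product topology is normable only for finite $J$. I would settle this by the bounded-neighbourhood half of Kolmogorov's criterion: each basic neighbourhood of zero in the product topology constrains merely finitely many coordinates while leaving the rest free, so for infinite $J$ every such neighbourhood contains an entire coordinate line and hence fails to be absorbed by the neighbourhood $\{x:|x_{k}|<1\}$ attached to a free index $k$; thus no neighbourhood of zero is bounded. A normed space always has bounded neighbourhoods of zero, so $J$ must be finite, say $J=\{1,\dots,n\}$, giving $M=\bigoplus_{j=1}^{n}M_{j}$ and completing the argument. Everything but this last product-topology fact is a transcription of the locally convex case.
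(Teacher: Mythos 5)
Your proof is correct and follows the same route the paper intends: the paper proves Proposition \ref{M} only by the remark ``Similarly'' pointing to Proposition \ref{LcM}, and your argument is exactly that parallel --- local convexity from normability gives the factor decomposition $M=\bigoplus_{j\in J}M_j$ via \ref{LcM}, the induced topology $t(Z(M))$ on $S(Z(M))\cong\mathbb{C}^{J}$ must then be normable, and Kolmogorov's bounded-neighbourhood criterion forces $J$ finite, with sufficiency from the finite product of norm topologies. Your explicit verification that no neighbourhood of zero in an infinite product $\mathbb{C}^{J}$ is bounded is a legitimate and complete filling-in of the step the paper leaves implicit.
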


\section*{Acknowledgments}
Part of this work was done within
the framework of the Associateship Scheme of the Abdus Salam
International Centre  for Theoretical Physics (ICTP), Trieste,
Italy. The first author thank ICTP for providing financial support
 and all facilities (July-August, 2011). This work is supported in part by
the DFG AL 214/36-1 project (Germany).


\begin{thebibliography}{13}


\bibitem{Alb1}{Albeverio~S.,  Ayupov~Sh.~A., Kudaybergenov~K.~K.,} Derivations
on the algebra of measurable operators affiliated with a type I
von Neumann algebra,  \textit{Siberian Adv. Math.}  18 (2008)
86--94.


\bibitem{Alb2}{Albeverio~S.,  Ayupov~Sh.~A., Kudaybergenov~K.~K.,} Structure of derivations on
various algebras of measurable operators for type I von Neumann
algebras, \textit{J. Func. Anal. } 256 (2009) 2917--2943.



\bibitem{AK1} Ayupov~Sh.~A., Kudaybergenov~K.~K.,
Additive derivations on algebras of measurable operators, ICTP,
Preprint,  No  IC/2009/059,   -- Trieste, 2009. -- 16 p. (accepted
in \textit{Journal of operator theory}).

\bibitem{AK2}{Ayupov~Sh.~A., Kudaybergenov~K.~K.,} Derivations on
algebras of measurable operators, \textit{Infin. Dimens. Anal.
Quantum Probab. Relat. Top. } 13 (2010) 305--337.




\bibitem{AK3}{Albeverio~S.,  Ayupov~Sh.~A., Kudaybergenov~K.~K.,
Djumamuratov~R.~T.,}
Automorphisms
  of central extensions  of type I von Neumann algebras, \textit{arXiv: 1104.4698. } (2011) 16 p.


\bibitem{Mur}  Muratov M.A.,   Chilin V.I.,  \textit{Algebras of measurable and locally measurable
 operators},
 Institute of Mathematics Ukrainian Academy of Sciences,  Kiev, 2007.

\bibitem{Mur1}  Muratov M.A.,   Chilin V.I.,  Central extensions of *-algebras of
measurable operators, \textit{Doklady AN Ukraine},  no 2 (2009)
24-28.




\bibitem{MurUJM}  Muratov M.A.,   Chilin V.I.,  $(o)$-topologies on *-algebras of
locally measurable operators, \textit{Ukrainan Jour. Math.}, 61 (2009)
1798-1808.









\bibitem{SXAC}  Sarymsakov T.A.,   Ayupov Sh.A,
Khadzhyev D.,  Chilin V.I.,  \textit{Ordered algebras},
FAN, Tashkent, 1983 (in Russian).




\bibitem{Seg}  Segal I., A
non-commutative extension of abstract integration, \textit{Ann.
Math.} 57 (1953) 401--457.




  \bibitem{Yea} Yeadon F.J.,
 Convergence of measurable operators.  \textit{Proc. Camb. Phil.
       Soc.} 74 (1973)  257-268.


 \end{thebibliography}
\end{document}